\newtheorem{theorem}{Theorem}
\newtheorem{proposition}[theorem]{Proposition}
\newcommand{\ve}[1]{\mathbf{#1}}
\newcommand\norm[1]{\left\lVert#1\right\rVert}
\renewcommand*{\ALG@name}{Method}
\makeatother\usepackage[utf8]{inputenc}
\title{On Application of Block Kaczmarz Methods in Matrix Factorization}
\author[Edwin Chau]{Edwin Chau \\\\ Sponsor: Jamie Haddock}
\begin{document}

\maketitle

\begin{abstract}
Matrix factorization techniques compute low-rank product approximations of high dimensional data matrices and as a result, are often employed in recommender systems and collaborative filtering applications. However, many algorithms for this task utilize an exact least-squares solver whose computation is time consuming and memory-expensive. In this paper we discuss and test a block Kaczmarz solver that replaces the least-squares subroutine in the common alternating scheme for matrix factorization.  This variant trades a small increase in factorization error for significantly faster algorithmic performance. In doing so we find block sizes that produce a solution comparable to that of the least-squares solver for only a fraction of the runtime and working memory requirement.
\end{abstract}

\section{Introduction}

With the recent rise and ubiquity of online services, the volume of data available for and demanding analysis has exploded. This data often resides in extremely high-dimensional space, and for this reason, it is often computationally and intuitively beneficial to reduce its dimension, offering decreased required storage space and information about the latent trends within the data. Matrix factorization (e.g., principal component analysis/singular value decomposition (PCA/SVD)~\cite{pearson1901liii}, CUR factorization~\cite{mahoney2009cur}, and the nonnegative matrix factorization (NMF)~\cite{leeseung}) are common approaches 
to find factor matrices whose product is a low-rank approximation to the original data matrix. These factor matrices, when combined with auxiliary methods, can help identify useful trends and features present in the data. As a result, matrix factorization models often find use in tasks such as collaborative filtering~\cite{he, pan, zhou} and topic modeling~\cite{berryemail}. Specific applications in which these models have been applied are document classification~\cite{gaussier2005relation}, facial recognition~\cite{turk1991face}, and the winning submission to the well-known Netflix Prize competition applied these methods to collaborative filtering for movie recommendation~\cite{koren2009collaborative}.

While many dimension-reduction and topic modeling approaches are framed as matrix factorization models, we focus on the simple low-rank factorization model for data $X \in \mathbb{R}^{m \times n}$ that seeks $A \in \mathbb{R}^{m \times k}$, generally called the dictionary matrix, and $S \in \mathbb{R}^{k \times n}$, generally called the representation matrix, such that $X \approx AS$.  Here we consider the \emph{factor rank} $k$ to be a user-defined parameter but assume that $k < \min\{m,n\}$ so  that the resulting factorization reduces the dimension of the original data and/or reveals latent themes in the data.  Generally, the factor rank $k$ can be chosen according to \emph{a priori} information or a heuristic method.  
Data points are typically stored as columns of $X$, thus $m$ represents the number of features, and $n$ represents the number of samples.
The columns of $A$ are generally referred to as \emph{topics} or \emph{dictionary elements}, which are characterized by features of the dataset.
Further, each column of $S$ provides the approximate representation of the respective column in $X$ in the lower-dimensional space spanned by the columns of $A$.
Thus, the data points are
well approximated by a linear combination of the latent topics with coefficients given by the entries of $S$.

Many methods for matrix factorization utilize an alternating scheme due to the non-convexity of some common problem formulations.  These methods iteratively alternate through the factor matrices, holding the others constant and updating one at a time. 
For example, the common Frobenius norm formulation of the matrix factorization problem asks, given a data matrix $X$, to minimize the objective function 
\begin{equation} 
    \norm{X - AS}_F^2 \label{eq:objfunction}
\end{equation}
with respect to $A \in \mathbb{R}^{m \times k}$ and $S \in \mathbb{R}^{k \times n}$
Alternating methods that approximately minimize this formulation do so by first fixing $A$ and iteratively solving for $S$ such that $X \approx AS$ (usually using the same formulation \eqref{eq:objfunction}), and then fixing $S$ and iteratively solving for $A$ such that $X \approx AS$. After fixing all but one of the factors, the resulting problem is often convex and the formulation encourages solution of a simple linear system; in the case of \eqref{eq:objfunction}, the resulting problems are simple least-squares problems. While the details of this process will be discussed in Section~\ref{subsec:preliminaries}, we note now that the quality and computational time of the solution is dictated by the chosen linear system solver.

One commonly utilized solver is the method of least-squares, where the exact least-squares solution (approximation to the linear system) is computed using the pseudo-inverse of the measurement matrix. However, while quick to converge, least-squares has a two-fold problem. First, calculating an exact solution for commonly encountered large linear systems is computationally expensive, requiring significantly more time to run as data matrices increase in dimension. Second, ALS requires the entire data matrix to be loaded into memory to solve for a solution, which significantly limits the size of systems for which this technique can be employed. 
Because of these drawbacks, we focus instead on utilizing an iterative method of solving linear systems within the alternating scheme of matrix factorization, namely \emph{randomized Kaczmarz (RK)} and its generalization, \emph{block randomized Kaczmarz (BRK)}. 

Kaczmarz methods are a large family of methods used to approximate the solution of a linear system by iteratively sampling and solving a subset of the system. The Kaczmarz method was first introduced by S. Kaczmarz~\cite{kaczmarzoriginal}. The randomized Kaczmarz (RK) method, a variant of the Kaczmarz method that randomly samples rows proportional to their $\ell_{2}$ norm, was later introduced by T. Strohmer and R. Vershynin and proven to have exponential convergence~\cite{strohmer}. The \emph{randomized extended Kaczmarz (REK)} is a variant of RK suitable for noisy systems and which converges to the least-squares solution~\cite{zouzias}. Furthermore, the generalization of RK known as block randomized Kaczmarz (BRK), where a submatrix is sampled rather than a single row, was introduced and shown to outpace RK on both well paved and randomly generated matrices~\cite{needell}. Employment of nonstandard stepsizes has further sped the convergence of the BRK method~\cite{necoara}.

Due to the often sparse and redundant nature of large scale data, a Kaczmarz method is a natural alternative to least-squares in a matrix factorization approach. We therefore consider an approach which replaces the traditional least-squares solver with BRK. This family of methods aims to approximate a solution to each linear system rather than computing an exact one, trading accuracy for a quicker runtime. 

We note that the methods studied in this paper are not novel; similar methods have been proposed in the context of sensor network localization where matrix partitions are fixed before sampling~\cite{gogna}. However, we demonstrate that this BRK-based approach performs well on a wider range of applications and also when the sampling procedure is more flexible. Our contributions include empirical results on sparse large-scale synthetic and real-world datasets as well as a publicly available Python package\footnote{\url{https://pypi.org/project/mf-algorithms/}} implementing the methods discussed.  

\subsection{Organization} \label{subsec:organization}
The rest of the paper is organized as follows. Section~\ref{sec:algorithms} provides details of the aforementioned ALS and BRK methods. Section~\ref{sec:experiments} applies these methods on small-scale synthetic, large-scale synthetic, and large-scale real-world datasets and compares their approximation error and runtimes. Section~\ref{sec:theory} provides theoretical insight on stationarity of the BRK-based matrix factorization method. Finally, in Section~\ref{sec:conclusions}, we provide some final conclusions and discussion of future work.

\subsection{Notation} \label{subsec:notation}
Let $A \in \mathbb{R}^{m \times n}$ be a matrix with $m$ rows and $n$ columns. For a matrix $A$, we denote the $i^{th}$ row of $A$ as $A_{i, :}$ and the $i^{th}$ column as $A_{:, i}$. We let $\norm{A}$ denote the operator norm of $A$, $\norm{A}_F$ denote the Frobenius norm of $A$, $A^{\dagger}$ denote the Moore-Penrose pseudoinverse of $A$, and $\sigma_{\text{max}}(A)$ denote the maximum singular value of $A$.  Finally, the methods considered in this work iteratively produce approximations to factor matrices $A$ and $S$ in the matrix factorization formulation \eqref{eq:objfunction}; we denote the approximation to matrix $A$ produced in the $j^{\text{th}}$ iteration of a method as $A^{(j)}$. We let $[n]$ denote the integers in the interval from $1$ to $n$, $[n] = \{1, 2, \ldots, n\}$. 

\subsection{Preliminaries} \label{subsec:preliminaries}
As mentioned previously, the alternating approach to computing the matrix factorization, 
\begin{equation} 
    AS \approx X, \label{eq:matrixeq}
\end{equation} 
is to repeatedly fix one factor matrix and update the other. A typical alternating scheme attempts to minimize the objective function \eqref{eq:objfunction} by reducing equation \eqref{eq:matrixeq} column-wise to linear systems 
\begin{equation}
   A S_{:, i} \approx X_{:, i} \text{ or } A_{j, :} S \approx X_{j, :} \label{eq:reduced}
\end{equation} 
if solving for $S$ or $A$, respectively. The alternating least-squares (ALS) method would then utilize the method of least-squares to update $S_{:, i}$ or $A_{j, :}$ before resampling. On the other hand, a Kaczmarz method takes a sample of the equations $\tau_1$ or $\tau_2$ and further reduces equations \eqref{eq:reduced} to 
\begin{equation}\label{eq:sublinsys}
    A_{\tau_1, :} S_{:, i} \approx X_{\tau_1, i} \text{ or } A_{j, :} S_{:, \tau_2} \approx X_{j, \tau_2}
\end{equation}
before updating approximate solutions for $S_{:, i}$ and $A_{j, :}$ as outlined in Section~\ref{subsec:brk}.  See Figure~\ref{fig:general} for a visualization of these algorithmic reductions.

\begin{figure}[tb]
    \makebox[\textwidth][c]{\includegraphics[width=\linewidth]{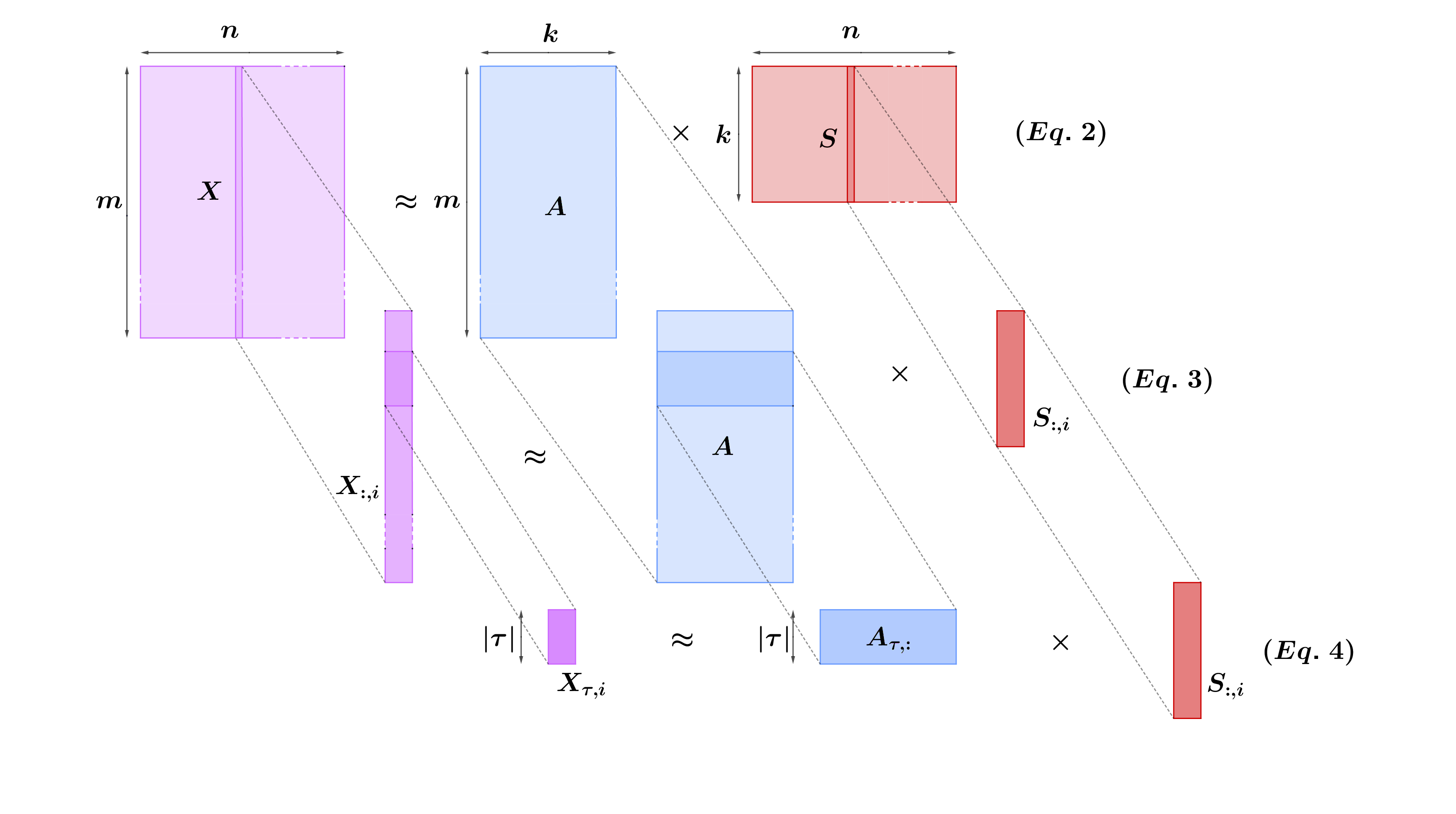}}
    \caption{The matrix equation (2) is reduced to a linear system (3) and solved using least-squares or further reduced to (4) and solved with an iterative method.}
    \label{fig:general}
\end{figure}

\section{Algorithms} \label{sec:algorithms}
The methods we consider follow the alternating scheme, which iteratively updates one of the factor matrices while holding the other constant.  They reduce the computation required in each iteration of alternating least-squares via the sequence of reductions visualized in Figure~\ref{fig:general}.  We refer to the first reduction from \eqref{eq:matrixeq} to \eqref{eq:reduced} as the \emph{matrix-vector reduction}, and the second reduction from \eqref{eq:reduced} to \eqref{eq:sublinsys} as the \emph{vector-sample reduction}.

The nature of the matrix-vector reduction affects the resulting solution. This reduction can be performed cyclically, where the algorithm iterates through and updates all rows of $A$ before iterating through and updating all columns of $S$. In this context, one iteration of the algorithm refers to one epoch, where rows and columns of $A$ and $S$ are updated once. 

The matrix-vector reductions can also be determined in a stochastic manner, where the algorithm alternates between updating one uniformly sampled row of $A$ and one uniformly sampled column of $S$. These are sampled without replacement to prevent solving the subsampled linear system so precisely that it reduces the quality of the overall matrix factorization approximation. In the case that $X \in \mathbb{R}^{m \times n}$ is not square, the number of row updates to $A$ and column updates to $S$ can be made proportional; that is, for every one column update to $S$, we perform $\lceil m/n \rceil$ row updates to $A$ (assuming $m > n$). 
Here, one epoch corresponds to $\text{min}(m, n)$ iterations.

The algorithms we will discuss and compare follow the alternating update scheme, the iterations of which we will refer to as "alternating" iterations. Our focus is on the BRK-based alternating method, where the vector-sample reduction is performed according to a stochastic sample. While algorithms could employ a combination of cyclic and stochastic schemes (for example, performing cyclic matrix-vector reductions and stochastic vector-sample reductions), they are beyond the scope of this paper. We focus instead only on a method which applies random sampling in both the matrix-vector and vector-sample reductions.

Finally, the manner of initialization of the factor matrices $A$ and $S$ also affects the final approximation. Because the objective function (\ref{eq:objfunction}) is non-convex in both $A$ and $S$~\cite{zhu2018optim}, there are many possible local optima within reach of matrix factorization algorithms and which the method finally reaches depends upon the initialization of the factor matrices. For this reason, our methods will simply randomly initialize two factor matrices before iteratively computing an improved approximation. While more sophisticated initialization techniques have been developed to promote faster convergence in specific applications, most do not guarantee this property in a general setting~\cite{gillis}.

\subsection{Alternating Least-Squares}
The ALS algorithm is as follows. Given a data matrix $X \in \mathbb{R}^{m \times n}$ and a target factor rank $k$, two factor matrices $A \in \mathbb{R}^{m \times k}$ and $S \in \mathbb{R}^{k \times n}$ are randomly initialized. It first samples an $i^{th}$ column of $S$ and performs a matrix-vector reduction. 

It then solves for $S_{:, i}$ using the least-squares method, updating $S_{:,i}$ with the explicit least-squares solution. The algorithm then repeats for a row of $A$. This iterative process continues until an exit condition is met. The pseudocode for this algorithm is provided in Method~\ref{alg:als}; note that this algorithm exploits only the matrix-vector reductions. Furthermore, ALS would specifically use the explicit least-squares solution as the LS solver.

\begin{figure}
    \centering
    \begin{subfigure}[b]{.5\textwidth}
      \centering
      \includegraphics[width=\linewidth]{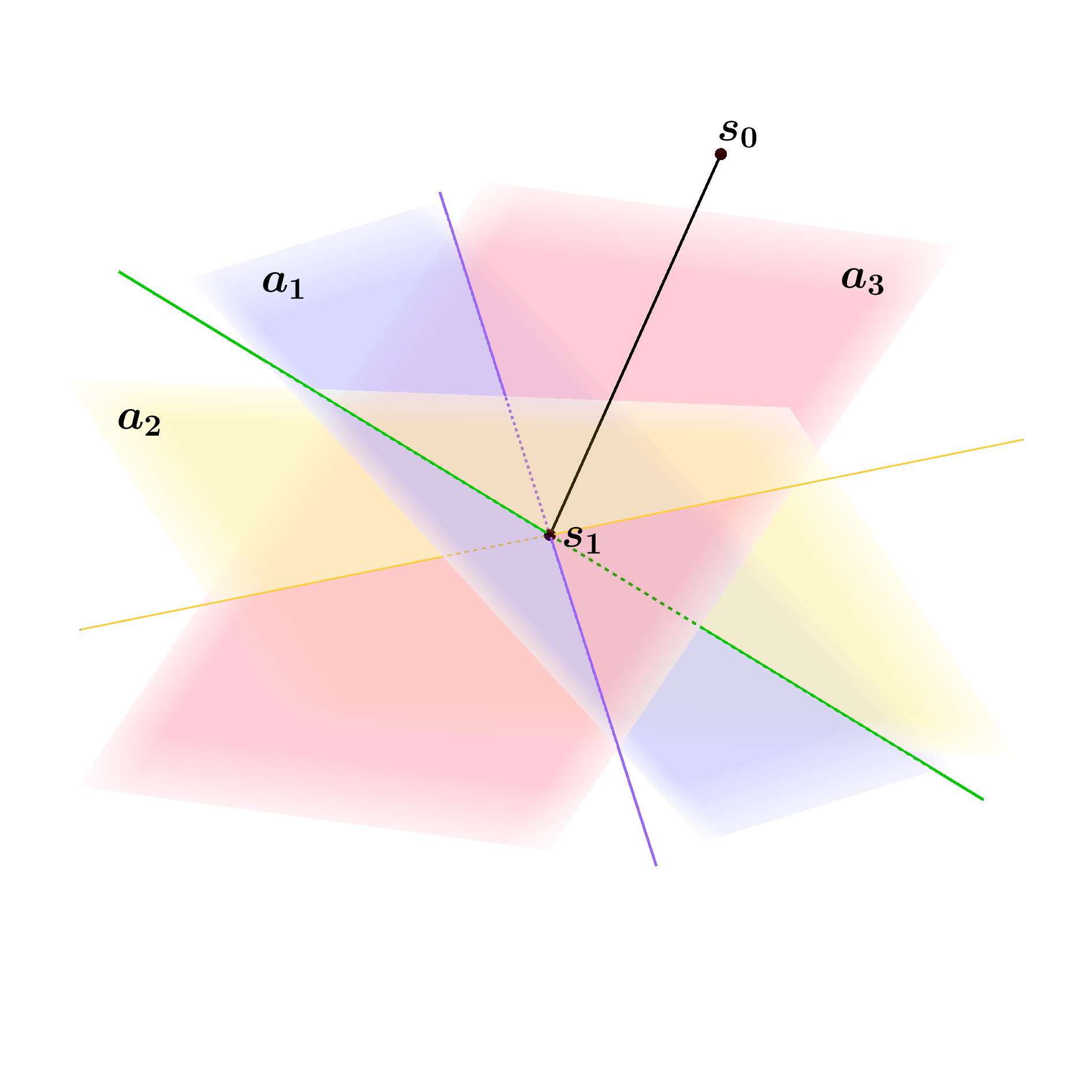}
      \caption{ALS update}
      \label{fig:updateals}
    \end{subfigure}%
    \begin{subfigure}[b]{.5\textwidth}
      \centering
      \includegraphics[width=\linewidth]{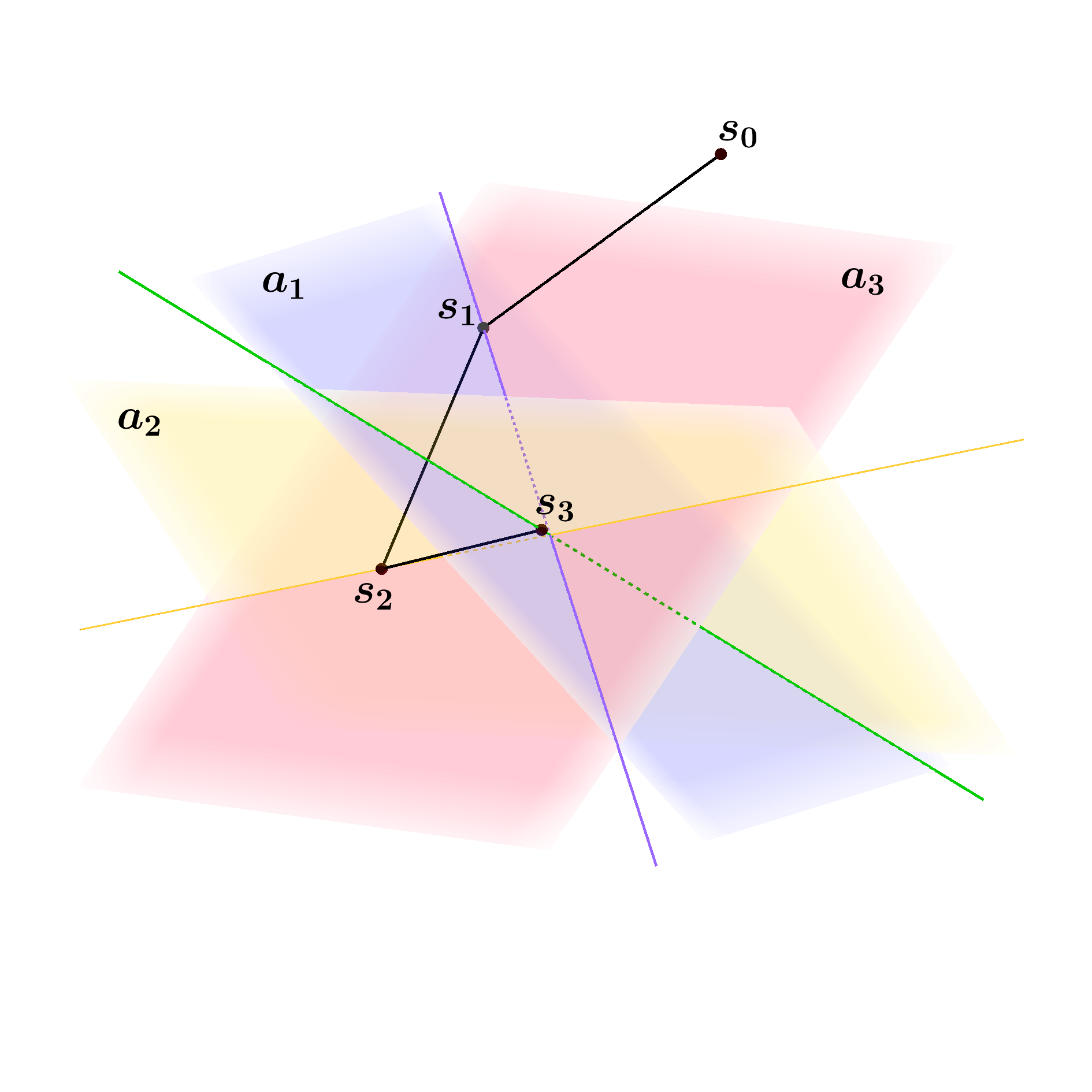}
      \caption{BRK update}
      \label{fig:updatebrk}
    \end{subfigure}
    \caption{Visualizations of updates. Linear equations are represented by hyperplanes and the solution space of two equations is represented by their intersection. \textbf{(Left)} An ALS update solves the linear system in one step. \textbf{(Right)} Three BRK updates defined by orthogonally projecting the previous iterate onto the solution space of the two sampled equations.}
    \label{fig:updates}
\end{figure}

\subsection{Block Randomized Kaczmarz} \label{subsec:brk}

Similar to ALS, the BRK algorithm first randomly initializes $A$ and $S$ and performs a matrix-vector reduction for a sampled column of $S$. The BRK algorithm then departs from ALS by performing a subsequent vector-sample reduction for $|\tau_1| \leq m$ rows of $A$ and $X$. 

It then performs a BRK update,
\begin{equation}
    S_{:,i}^{(j+1)} = S_{:,i}^{(j)} + (A_{\tau_1,:})^{\dagger}(X_{\tau_1,i} - A_{\tau_1,:}S_{:,i}^{(j)}).
\end{equation}

We can similarly update $A$ by selecting a subset $|\tau_2| \leq n$ of the columns of $S$,
\begin{equation}
    A_{i,:}^{(j+1)} = A_{i,:}^{(j)} + (X_{i,\tau_2} - A_{i,:}S_{:,\tau_2})(S_{:,\tau_2})^{\dagger}.
\end{equation}

The pseudocode for this algorithm is given in Method \ref{alg:brk}. Note that Method \ref{alg:als} details the alternating matrix factorization algorithm whereas Method \ref{alg:brk} details a BRK subroutine which is a specific option for the LS solver in Method \ref{alg:als}. Furthermore, the subsets $\tau_1, \tau_2$ do not need to be contiguous, and BRK is equivalent to a Randomized Kaczmarz (RK) update when $|\tau_1| = |\tau_2| = 1$. 

In this paper we will focus on a uniform sampling method, where the rows of $A$ are picked with probability $\frac{1}{m}$ and columns of $S$ are picked similarly. We will refer to this method as uniform block randomized Kaczmarz (UBRK). 

Others have previously found that a weighted sampling method offers the benefit of quicker convergence, however we did not find success with it in this application. When testing its performance against UBRK, calculations required for a weighted sample resulted in a significantly longer runtime for little relative error improvement.

\begin{algorithm}
	\caption{Alternating Scheme}\label{ALS}
	\label{alg:als}
	\begin{algorithmic}[1]
		\Procedure{AlteratingScheme}{$X \in \mathbb{R}^{m \times n}, k, M, L$} 
		\State{initialize $A \in \mathbb{R}^{m \times k}, S \in \mathbb{R}^{k \times n}$}
		\For{$l = 1, \ldots, L$}
		\State{sample column index $i \in [n]$}
		\State{use $L$ iterations of LS solver to approximately solve for $\ve{s} \in \mathbb{R}^k$ in $$X_{:,i} \approx A \ve{s}$$}
		\State{replace $i$th column of $S$ with $\ve{s}$, $$S_{:,i} = \ve{s}$$}
		\vspace{0.1cm}
		\State{sample row index $j \in [m]$}
		\State{use $L$ iterations of LS solver to approximately solve for $\ve{a} \in \mathbb{R}^k$ in $$X_{j,:}^\top \approx S^\top \ve{a}$$}
		\State{replace $j$th row of $A$ with $\ve{a}$, $$A_{j,:} = \ve{a}^\top$$}
		\EndFor{}
		\Return{$A, S$}
		\EndProcedure
	\end{algorithmic}
\end{algorithm}

\begin{algorithm}
	\caption{Block Randomized Kaczmarz (BRK)}\label{BRK}
	\label{alg:brk}
	\begin{algorithmic}[1]
		\Procedure{BRK}{$A \in \mathbb{R}^{m \times k}, \ve{b} \in \mathbb{R}^m, r, L$}
		\State{initialize $\ve{y}_0 \in \mathbb{R}^k$}
		\For{$l = 1, \ldots, L$}
		\State{sample $\tau \subset [n]$ such that $|\tau| = r$}
		\State{$\ve{y}_l = (I - A_{\tau,:}^\dagger A_{\tau,:})\ve{y}_{l-1} + A_{\tau,:}^\dagger \ve{b}_{\tau}$}
		\EndFor{}
		\Return{$\ve{y}_L$}
		\EndProcedure
	\end{algorithmic}
\end{algorithm}

\section{Experimental Results} \label{sec:experiments}
In this section, we apply the discussed algorithms to both synthetic and real data. We first compare their performances and results in detail on a small-scale synthetic data matrix before conducting a more significant comparison on synthetic and real large-scale data. Our implementations of these methods utilize Python's Numpy library~\cite{numpy}, and pseudo-inverse calculations were avoided using Numpy's lstsq() function. The test matrices in all tests were randomly generated using Numpy as well. 

We used an Intel(R) Core(TM) i7-4770 CPU(3.40 GHz) with 16.0 GB of RAM running on Windows OS for relative error computations. Runtime calculations were performed using an Intel(R) Core(TM) i3-3225 CPU(3.30 GHz) with 6.0 GB of RAM running on GNU/Linux. 

\subsection{Small-scale Synthetic Data} 

We applied our matrix factorization method on a random matrix $X \in \mathbb{R}^{1000 \times 1000}$. We generate $X$ as the product of factor matrices $A \in \mathbb{R}^{1000 \times 50}$ and $S \in \mathbb{R}^{50 \times 1000}$, which were generated by sampling each entry from $\{0, 1, 2, 3\}$ with probabilities $\{0.97, 0.1, 0.1, 0.1\}$ and $\{0, 1\}$ with probabilities $\{0.99, 0.01\}$, respectively. 
The resulting matrix $X$ had a sparsity of $98.58\%$ and true rank of $50$. 

Our proposed iterative methods have four main parameters: factor rank, number of alternating iterations, number of subiterations, and block size. Because the ideal factor rank is known to be $50$, it will be fixed at that value for all small-scale tests. Note that ALS will be used as a baseline to gauge performance and will be fixed to employ one subiteration and the maximum possible block size ($m$ rows of $A$ and $n$ columns of $S$) for all tests. Each data point in our plots is the average of 10 trials.

The first question we address is how much of the data matrix needs to be sampled at each iteration in order to converge to a solution. To answer this, we compared the UBRK method at block sizes of $20\%$ increments and ALS. The results are shown in Figure \ref{fig:smallsynth}. Although a wide range of block sizes can be used, we excluded block sizes that were not large enough to converge to a local solution.

We can see that block sizes of about $40\%$ and above are sufficient to converge to a factorization that captures most of the original matrix data (Figure~\ref{fig:smallsynth} left). Furthermore, as block size increases, the rate of convergence increases and final relative error decreases. However, while larger block sizes lead to faster convergence, they also scale faster in terms of runtime (Figure~\ref{fig:smallsynth} right). As a result, while a block size of 100\% for UBRK achieves the same relative error as ALS, it requires more time to run. However, ALS is optimal for small datasets due to their manageable size allowing a better tradeoff of runtime for accuracy, as well as lower memory limitations. We will see in the large-scale experiments that UBRK has a computational advantage over ALS in larger dimensions. 

\begin{figure}[ht]
    \makebox[\textwidth][c]{\includegraphics[width=\textwidth]{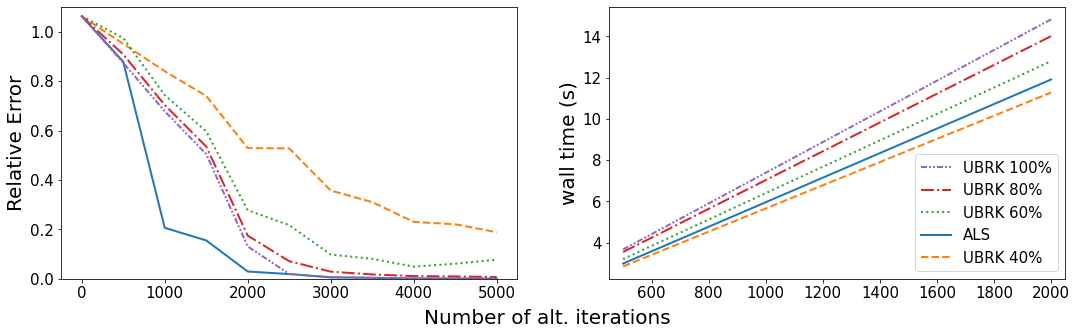}}%
    \caption{Results of small-scale synthetic data experiments. The data matrix $X \in \mathbb{R}^{1000 \times 1000}$ is generated by multiplying two synthetically generated factor matrices.
    The percentage following "UBRK" denotes the percentage of rows or columns sampled per iteration, i.e., UBRK 40\% means 40\% of the rows of A and 40\% of the columns of S were sampled by UBRK. 
    \textbf{(Left)} Relative error of methods with various numbers of alternating iterations. 
    \textbf{(Right)} Wall time of methods.}
    \label{fig:smallsynth}
\end{figure}

We also varied the number of subiterations amongst $1$, $10$, $20$, and $30$ while fixing the number of alternating iterations at $1000$. An exit condition was included when updating a single row or column; the method exited subiterations and continued to the next alternating iteration when $\norm{AS_{:, i} - X_{:, i}} < \epsilon$. We found that for block sizes greater than $1$, increasing the number of subiterations did not affect the relative error, meaning that the block Kaczmarz updates either converged after the first update or would have trouble converging at all.

\subsection{Large-scale Synthetic Data} We now turn to a large-scale data matrix more reflective of real world data, and increase the data matrix size to $X \in \mathbb{R}^{10^5 \times 1000}$. This matrix was generated similar to our previous experiment. The factor matrices $A \in \mathbb{R}^{10^5 \times 50}$ and $S \in \mathbb{R}^{50 \times 1000}$ were generated by sampling each entry from $\{0, 1, 2, 3\}$ with probabilities $\{0.97, 0.1, 0.1, 0.1\}$ and $\{0, 1\}$ with probabilities $\{0.999, 0.001\}$, respectively. This resulted in a slightly higher sparsity of $99.86\%$ and a true matrix rank of $29$, which did not significantly affect the relative error to which the algorithms converged. In light of the previous set of experiments, we will fix the number of subiterations at $1$ since increasing the subiteration count had no discernable effect on the resulting relative error. 

With a tall matrix, it may be useful to explore asymmetric block sizes - block sizes that are different for each factor matrix - to "balance" the optimality of both factor matrices.  The results of these experiments are depicted in Figure~\ref{fig:largesynth}. 

Note that ALS was terminated at $2000$ alternating iterations as its relative error effectively reached zero, and UBRK was terminated at $5000$ iterations for the same reason (Figure~\ref{fig:largesynth} left). While symmetric block sizes of $25\%$ and $50\%$ were slower to converge to a solution than ALS despite their faster runtimes, they can still be advantageous when facing a memory limitation, as they use $25\%$ and $50\%$ of the required memory for ALS, respectively. 

However, an interesting result is that an asymmetric block size of $1\%$ of rows of $A$ and the full $S$ matrix was comparable in performance to ALS (Figure~\ref{fig:largesynth} left).  This choice of block sizes not only cuts the runtime of the matrix factorization task in half, but also requires a bit less than $2\%$ of the working memory required by ALS.

It is also important, as evident in Figure~\ref{fig:largesynth} (left), to note that the runtime of block size pairs $25\% / 25\%$ (orange) and $1\% / 100\%$ (pink) are very similar despite the decrease from sampling $25\%$ to $1\%$ of the left factor matrix (or from $25,000$ rows to $1000$). This is due to the left factor requiring more updates than the right factor. Because our data matrix has $100$ times more rows than columns, the methods update the left factor matrix $100$ times for every $1$ update of the right factor. As a result the increase from sampling $25\%$ to $100\%$ of the right factor (or from $250$ columns to $1000$) overshadows the decrease in sampled rows. 

The algorithms have been able to achieve a low final relative error due to choosing a factor rank close to the true matrix rank of the synthetically generated data. When testing on real data, where the true rank is often unknown, our choice of factor rank may significantly affect the relative error of resulting approximations. This will be the case in the following experiment. 

\begin{figure}[ht]
    \makebox[\textwidth][c]{\includegraphics[width=\textwidth]{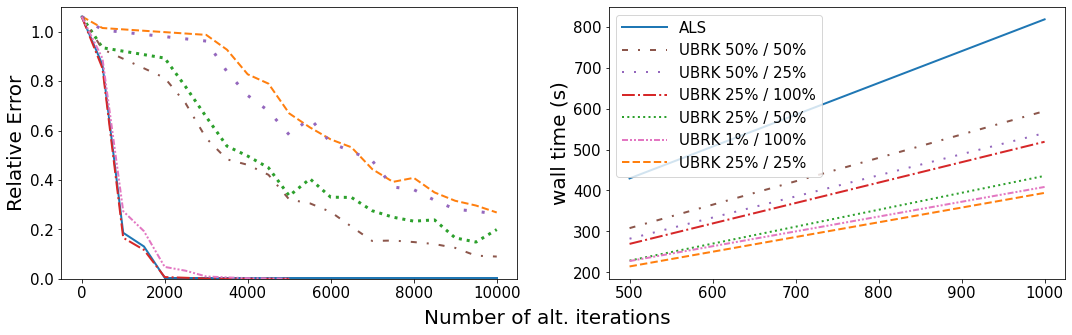}}%
    \caption{Results of large-scale synthetic data experiments. The data matrix $X \in \mathbb{R}^{1000 \times 1000}$ is generated by multiplying two synthetically generated factor matrices. 
    The percentages following "UBRK" denotes the percentage of rows/columns sampled per iteration, i.e. UBRK 25\%/50\% means 20\% of the rows of A and 50\% of the columns of S were sampled by UBRK. \textbf{(Left)} Relative error of methods with various numbers of alternating iterations. 
    \textbf{(Right)} Wall time of methods.}
    \label{fig:largesynth}
\end{figure}

\subsection{Real World Data} Finally, we compare algorithm performance on a real-world dataset from Amazon, which is publicly available as part of Dr. J. McAuley's Recommender Systems Datasets~\cite{he, ni}. The data was originally a dataframe of reviewer IDs, product IDs, time and ratings, which we reformatted by generating a data matrix with the reviewer IDs as the row indices, product IDs as the column indices, and ratings as the matrix entries. The resulting matrix was $128877 \times  1548$ and had a sparsity of $0.99926$. Zero entries represented the absence of a rating and nonzero integer entries ranged from $1$ to $5$. 

We once again chose a factor rank of $50$. However, the true matrix rank was $1548$, and this difference will impact the quality of resulting approximations, as shown in Figure \ref{fig:app}. Both ALS and UBRK converged to the solution at a slower rate and converged to a higher relative error at about $0.7$. This means that the solution was only able to capture about $30\%$ of the information provided by the test matrix. This is due in part to the mismatch between the factor rank and the true data rank.

Similar to the large-scale synthetic data, sampling the entire $S$ matrix and a small percentage of $A$ enabled UBRK to reach a solution with final relative error similar to that of ALS, and to converge far quicker. 
In this scenario, UBRK with less than maximal $S$ block size was not able to converge to final relative error similar to that of ALS.

\begin{figure}[ht]
    \makebox[\textwidth][c]{\includegraphics[width=\textwidth]{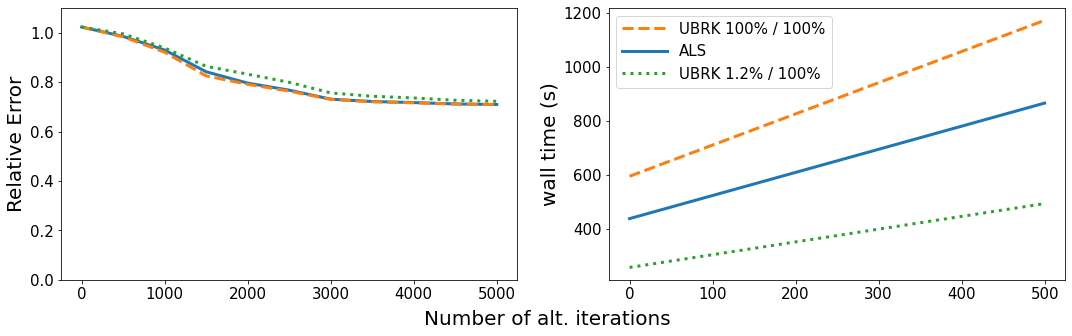}}%
    \caption{Results of Amazon dataset experiments. The data matrix $X \in \mathbb{R}^{128877 \times 1548}$ was full rank with a factor rank of $50$. \textbf{(Left)} Relative error of methods with various numbers of alternating iterations.
    \textbf{(Right)} Wall time of methods.}
    \label{fig:app}
\end{figure}

\section{Theoretical Results} \label{sec:theory}
One ideal property of iterative solvers for matrix factorization is that they do not update the factor matrices away from a (local or global) minimum of the objective function defining the problem formulation.  
Thus, because BRK is an iterative solver, it is of interest to bound the BRK iterative update at or near a stationary point to ensure it does not diverge too far from the local optimum. The following proposition provides an upper bound on the distance between the stationary point and a subsequent BRK iterative update.

\begin{proposition}
    Suppose $A^{(j)}, S^{(j)}$ locally minimize the objective function $F(A, S) = \frac{1}{2} \norm{X - AS}_F^2$ and $A^{(j+1)}$ has the $i^{th}$ row $A_{i,:}^{(j+1)}$, a BRK update of the $i^{th}$ row of $A^{(j)}$ using $S_{:, \tau}$. Then 
    $\norm{A^{(j+1)} - A^{(j)}}$ is bounded above by $\frac{\delta}{\sigma_{\text{min}}(SS^T)}$, where $\|\nabla F(A_{i, :}^{(j + 1)}, S)\| \leq \delta$.
\end{proposition}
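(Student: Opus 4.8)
The plan is to reduce the matrix-norm bound to a statement about a single row and then exploit the first-order optimality of the local minimizer. First I would observe that, by construction, $A^{(j+1)}$ and $A^{(j)}$ agree in every row except the $i$th, so their difference is the outer product $e_i(A_{i,:}^{(j+1)} - A_{i,:}^{(j)})$ with $e_i \in \R^m$ a standard basis vector. The operator norm of a rank-one outer product is the product of the factor norms, hence $\norm{A^{(j+1)} - A^{(j)}} = \norm{A_{i,:}^{(j+1)} - A_{i,:}^{(j)}}$. Thus it suffices to bound the Euclidean norm of the update applied to the single row $A_{i,:}$, with $S = S^{(j)}$ held fixed throughout.

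Next I would write out the relevant partial gradient. Restricting $F$ to the $i$th row, the gradient with respect to $A_{i,:}$ is the affine map $g(\ve{a}) = (\ve{a}S - X_{i,:})S^\top = \ve{a}\,SS^\top - X_{i,:}S^\top$. The crucial input is that $(A^{(j)}, S^{(j)})$ is a local minimizer of $F$, so $\nabla F$ vanishes there; in particular its $i$th row block vanishes, giving $g(A_{i,:}^{(j)}) = 0$. Because $g$ is affine and $S$ is unchanged between the two evaluations, the gradient at the new iterate telescopes to $g(A_{i,:}^{(j+1)}) = g(A_{i,:}^{(j+1)}) - g(A_{i,:}^{(j)}) = (A_{i,:}^{(j+1)} - A_{i,:}^{(j)})\,SS^\top$. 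This identity is the heart of the argument: it converts the hypothesis $\norm{g(A_{i,:}^{(j+1)})} \le \delta$ into a bound on the row update filtered through $SS^\top$.

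Finally I would invert the factor $SS^\top$ in norm. Since $SS^\top$ is symmetric positive semidefinite, diagonalizing it and bounding by its smallest eigenvalue gives, for any row vector $\ve{v}$, the inequality $\norm{\ve{v}\,SS^\top} \ge \sigma_{\text{min}}(SS^\top)\norm{\ve{v}}$. Applying this to $\ve{v} = A_{i,:}^{(j+1)} - A_{i,:}^{(j)}$ and combining with the telescoping identity and the hypothesis yields $\sigma_{\text{min}}(SS^\top)\norm{A_{i,:}^{(j+1)} - A_{i,:}^{(j)}} \le \norm{(A_{i,:}^{(j+1)} - A_{i,:}^{(j)})SS^\top} = \norm{g(A_{i,:}^{(j+1)})} \le \delta$, which rearranges to $\frac{\delta}{\sigma_{\text{min}}(SS^\top)}$ and, via the first step, gives the claimed bound on $\norm{A^{(j+1)} - A^{(j)}}$.

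The computation is short, so the main obstacle is conceptual bookkeeping rather than hard analysis: I must keep the gradient convention, the mismatched norms (operator norm on the left, Euclidean norm of the gradient on the right), and the two evaluation points consistent, and I must apply the singular-value inequality to the symmetric matrix $SS^\top$ rather than to $S$ itself. I would also flag the standing assumption that $S$ has full row rank, so that $\sigma_{\text{min}}(SS^\top) > 0$ and the bound is finite; otherwise the statement is vacuous. It is worth remarking that the argument never uses the explicit BRK formula for $A_{i,:}^{(j+1)}$ beyond the fact that only row $i$ changes — the entire conclusion rests on first-order optimality at $(A^{(j)}, S^{(j)})$ together with the gradient hypothesis $\delta$, so the same stationarity bound holds for \emph{any} single-row update with gradient controlled by $\delta$.
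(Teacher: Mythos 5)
Your proposal is correct and follows essentially the same route as the paper's proof: first-order optimality kills the gradient at $A_{i,:}^{(j)}$, linearity of the gradient in the row turns the hypothesis $\|\nabla F(A_{i,:}^{(j+1)},S)\|\leq\delta$ into $\|(A_{i,:}^{(j+1)}-A_{i,:}^{(j)})SS^\top\|\leq\delta$, and the smallest singular value of $SS^\top$ inverts that bound. You additionally supply two details the paper leaves implicit — the reduction of $\norm{A^{(j+1)}-A^{(j)}}$ to the single changed row via the rank-one structure of the difference, and the full-row-rank condition on $S$ needed for $\sigma_{\min}(SS^\top)>0$ (the paper only assumes $SS^\top\neq 0$) — both of which strengthen rather than alter the argument.
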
 

\begin{proof}
    Suppose $A^{(j)}$ and $S$ are minimizers. Then it is true that
    \begin{equation}
        \nabla_A F\left(A^{(j)}, S\right) = -S\left(X_{i, :}^T - S^T \left(A^{(j)}\right)^T\right) = 0.
    \end{equation}
    It immediately follows that 
    \begin{equation}
        \nabla_{A_{i, :}} F\left(A^{(j)}, S\right) = -S\left(X_{i, :}^T - S^T \left(A_{i, :}^{(j)}\right)^T\right) = 0. \label{eq:rowgrad}
    \end{equation}
        
    Now suppose $\norm{\nabla_{A_{i, :}} F\left(A^{(j+1)}, S\right)} \leq \delta$. Then
    \begin{equation}
        \norm{\nabla_{A_{i, :}} F\left(A^{(j)}, S\right) - \nabla_{A_{i, :}} F\left(A^{(j+1)}, S\right)} \leq \delta. \label{eq:graddist}
    \end{equation}
    
    Substituting in the gradient expression (\ref{eq:rowgrad}), we get
    \begin{equation}
        \norm{S\left(X_{i, :}^T - S^T \left(A_{i, :}^{(j)}\right)^T\right) - S\left(X_{i, :}^T - S^T \left(A_{i, :}^{(j+1)}\right)^T\right)} \leq \delta.
    \end{equation}
    
    Canceling terms, we're left with the simplified expression
    \begin{equation}
        \norm{SS^T\left(A_{i, :}^{(j+1)} - A_{i, :}^{(j)} \right)^T} \leq \delta.
    \end{equation}
    
    To recap, we are able to reduce the distance between the gradient of the objective function at $A_{i, :}^{(j)}$ and the updated $A_{i, :}^{(j+1)}$ denoted by (\ref{eq:graddist}) to an expression containing the difference between $A_{i, :}^{(j+1)}$ and $A_{i, :}^{(j)}$. This expression can be bounded by $\delta$ since the gradient at $A_{i, :}$ is assumed to be $0$. 
    
    Noting that $\norm{\left(A_{i, :}^{(j+1)} - A_{i, :}^{(j)}\right)^T} = \norm{I\left(A_{i, :}^{(j+1)} - A_{i, :}^{(j)}\right)^T}$ and applying the Cauchy-Schwarz inequality, we have
    \begin{equation}
        \norm{\left(A_{i, :}^{(j+1)} - A_{i, :}^{(j)}\right)^T} \leq \norm{\left(SS^T\right)^{\dagger}} \norm{SS^T \left(A_{i, :}^{(j+1)} - A_{i, :}^{(j)}\right)^T}. 
    \end{equation} 
    
    Assuming $SS^T \neq 0$, we can divide both sides by $\norm{\left(SS^T\right)^{\dagger}}$ to get
    
    \begin{equation}
        \frac{1}{\norm{(SS^T)^{\dagger}}} \norm{\left(A_{i, :}^{(j+1)} - A_{i, :}^{(j)}\right)^T} \leq \norm{SS^T \left(A_{i, :}^{(j+1)} - A_{i, :}^{(j)}\right)^T}.
    \end{equation}
    
    Because the spectral norm of a matrix is equal to its largest singular value $\sigma_{\text{max}}$ and a matrix's smallest singular value is its pseudoinverse's largest singular value, we reach the following result
    \begin{equation}
        \sigma_{\text{min}}\left(SS^T\right) \norm{\left(A_{i, :}^{(j+1)} - A_{i, :}^{(j)}\right)^T} \leq \delta. \label{eq:last}
    \end{equation}
\end{proof}

While $\delta$ may not always be sufficiently small, this will be the case if $A_{i, :}^{(j + 1)} S = X_{i, :}$, or if the approximation is at the global solution.

\section{Conclusions} \label{sec:conclusions}

This work rigorously compares the quality and speed of ALS and BRK subroutines in a common matrix factorization setting. We explore how different combinations of block sizes for BRK affect its solution quality and runtime when compared to an ALS benchmark. During this process we also observe patterns in particularly successful BRK block sizes; sampling most, or all, of the smaller factor matrix and only a small portion of the larger factor matrix is sufficient for converging to a solution of similar quality to ALS in less time. 

Directions for future work regarding Kaczmarz subroutines include finding optimal stepsizes and further theoretical proofs detailing relative error behavior as the solution approaches a local minimum. Future work on matrix factorization in general can also include finding an optimal factor matrix initialization. We also plan to generalize these techniques to tensor decompositions, where the size of the data can be extremely large and subsampling can offer drastic speedup.

\section*{Acknowledgements}
We would like to thank Jacob Moorman for his guidance and resources for preparing and publishing Python packages. Sponsor JH was partially supported by NSF DMS $\#2011140$ and NSF BIGDATA $\#1740325$.

\bibliographystyle{abbrv}
\bibliography{ref}

\end{document}